\newcounter{thmlistcnt}
\newenvironment{thmlist}%
	{\setcounter{thmlistcnt}{0}%
	\begin{list}{\emph{(\roman{thmlistcnt})}}{%
		\usecounter{thmlistcnt}%
		\setlength{\topsep}{0pt}%
		\setlength{\leftmargin}{0pt}%
		\setlength{\itemsep}{0pt}%
		\setlength{\labelwidth}{17pt}
		\setlength{\itemindent}{30pt}}%
	}%
	{\end{list}}%
\newcommand{\mfrac}[2]{{\textstyle\frac{#1}{#2}}}
\newcommand{\Sym}{\mathrm{Sym}}
\newtheorem{theorem}{Theorem}[section]
\newtheorem{lemma}[theorem]{Lemma}
\newtheorem{corollary}[theorem]{Corollary}
\theoremstyle{remark}
\newtheorem{remark}[theorem]{Remark}
\renewcommand{\P}{\mathbb{P}}
\newcommand{\deltaP}{\varepsilon}
\renewcommand{\theta}{\vartheta} 
\newcommand{\ts}{s} 
\newcommand{\zero}{0} 
\theoremstyle{definition}
\newtheorem{example}[theorem]{Example}
\title[Double coset lumping]{A necessary and sufficient condition for double coset lumping of Markov chains on groups with an application to the random to top shuffle}
\author{John R.~Britnell and Mark Wildon}
\address{School of Mathematics, Statistics and Physics, Newcastle University, Newcastle Upon Tyne, NE1 7RU}
\email{John.Britnell1@newcastle.ac.uk}
\address{Heilbronn Institute for Mathematical Research, School of Mathematics, University of Bristol, Woodland Road, Bristol BS8 1UG, UK}
\email{mark.wildon@bristol.ac.uk}
\subjclass[2020]{20A05 (Primary), 15A18, 15B51, 60G10, 60J10 (Secondary)}
\newcommand{\N}{\mathbb{N}}
\begin{document}

\begin{abstract}
Let $Q$ be a probability measure on a finite group $G$, and let~$H$ be a subgroup of $G$. We show that a necessary and sufficient condition for the random walk driven by $Q$ on $G$ to induce a Markov chain on the double coset space $H\backslash G/H$ is that $Q(gH)$ is constant as~$g$ ranges over any double coset of $H$ in $G$. We obtain this result as a corollary of a more general theorem on the double cosets $H \backslash G / K$ for~$K$ an arbitrary subgroup of $G$. As an application we study a variation on the $r$-top to random shuffle which we show induces an irreducible, recurrent, reversible and ergodic Markov chain on the  double cosets of $\Sym_r \times \Sym_{n-r}$ 
in $\Sym_n$. The transition matrix of the induced walk has remarkable spectral properties: we find its invariant distribution and its eigenvalues and hence determine its rate of convergence.
\end{abstract}

\maketitle

\section{Introduction}

\thispagestyle{empty}
Diaconis, Ram and Simper \cite{DRS} have shown that if $H$ and $K$ are subgroups of a finite group $G$, and $Q$ is a probability measure on $G$ which is constant on the conjugacy classes of $G$, then the random walk on $G$ driven by~$Q$ induces a time-homogeneous
Markov chain on the double cosets $H\backslash G/K$ under the quotient map 
sending $g \in G$ to its double coset $HgK$. This is notable as typically a quotient of a Markov chain does not have the Markov property.
In this paper we state and prove Theorem~\ref{thm:HxK}, 
giving  a necessary and sufficient condition for the random walk 
driven by an arbitrary probability measure on $G$ to induce a time-homogeneous Markov chain on the 
double cosets 
$H\backslash G/K$. (The chain may start at an arbitrary initial distribution;
this includes a deterministic state.)
The case when $K=H$ is of particular interest. In this case
our necessary and sufficient condition takes the simple form stated in 
Corollary~\ref{cor:HxH} and Corollary~\ref{cor:action}.

We express our transition probabilities using a \emph{weight function} $w:G\rightarrow \mathbb{R}_{\ge 0}$. For a subset $S$ of $G$ we write $w(S)$ for $\sum_{s\in S}w(s)$. We stipulate that $w(G)>0$; hence we 
obtain a probability measure $Q$ on $G$ from $w$ by defining $Q(g) = w(g)/w(G)$ for all $g\in G$. We can therefore refer to the random walk on $G$ \emph{driven} by $w$. 
We work throughout with right multiplication
(the opposite convention to \cite{DRS}); thus the probability of a transition from $x \in G$ to $y \in G$
is $w(x^{-1}y)$.

\begin{theorem}\label{thm:HxK}
Let $w$ be a weight function on the finite group $G$, and let~$H$ and~$K$ be subgroups of $G$. A necessary and sufficient condition for the random walk on $G$ driven by $w$ to induce a time-homogeneous
Markov chain on the double cosets $H\backslash G/K$
is that $w(kxHyK) = w(xHyK)$ for all $x, y \in G$ and $k\in K$. 
If this condition holds then the probability of a step from $HxK$ to $HyK$ in the quotient chain is
proportional to $w(x^{-1}HyK)$.
\end{theorem}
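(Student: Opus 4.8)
The plan is to reduce the statement to the classical strong-lumpability criterion for Markov chains and then perform a short double-coset computation. Recall that in the walk driven by $w$ the one-step probability from $x$ to $y$ is $P(x,y)=w(x^{-1}y)/w(G)$. A partition of the state space into blocks lumps a time-homogeneous Markov chain to a time-homogeneous chain \emph{for every} initial distribution precisely when, for any two blocks $A$ and $B$, the total probability $\sum_{z\in B}P(g,z)$ of moving from a state $g\in A$ into $B$ is independent of the choice of $g$ within $A$; the common value is then the transition probability of the lumped chain from $A$ to $B$. I would first record this criterion. Its necessity is what forces the parenthetical ``arbitrary initial distribution'' to do work: starting the walk deterministically at a state $g\in A$, the induced one-step probability into $B$ equals $\sum_{z\in B}P(g,z)$, and for the lumped transition matrix to be well defined this must not depend on which $g\in A$ we chose. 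Sufficiency runs the other way via the tower property, since once the inner sums are constant on each block the conditional one-step probabilities no longer see the fine position of the chain within a block.

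Taking the blocks to be the double cosets $HxK$, I would compute the relevant quantity. For $g\in G$, left translation by $g^{-1}$ is a bijection of $HyK$ onto $g^{-1}HyK$, so
\[
\sum_{z\in HyK}P(g,z)=\frac{1}{w(G)}\sum_{z\in HyK}w(g^{-1}z)=\frac{1}{w(G)}\,w\bigl(g^{-1}HyK\bigr).
\]
By the lumpability criterion, the walk therefore induces a time-homogeneous Markov chain on $H\backslash G/K$ (for all initial distributions) if and only if $w(g^{-1}HyK)$ is constant as $g$ ranges over $HxK$, for all $x,y\in G$.

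The heart of the argument is to put this coset-wise constancy into the stated form. Writing a general element of $HxK$ as $g=hxk$ with $h\in H$ and $k\in K$ gives $g^{-1}=k^{-1}x^{-1}h^{-1}$, and since $h^{-1}H=H$ the factor $h^{-1}$ is absorbed, so $g^{-1}HyK=k^{-1}x^{-1}HyK$. Hence as $g$ ranges over $HxK$ the value $w(g^{-1}HyK)$ runs exactly over $\{\,w(k\,x^{-1}HyK):k\in K\,\}$, and because $K$ is a group this set is a singleton if and only if $w(kx^{-1}HyK)=w(x^{-1}HyK)$ for every $k\in K$. Relabelling $x^{-1}$ as $x$ (legitimate since $x$ ranges over all of $G$) yields exactly $w(kxHyK)=w(xHyK)$. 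Finally, evaluating the common block-to-block value at the representative $g=x$ gives the one-step probability $w(x^{-1}HyK)/w(G)$, which is proportional to $w(x^{-1}HyK)$ as claimed.

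I expect the main obstacle to be the careful handling of the ``for every initial distribution'' clause rather than the coset algebra, which is routine once the bijection $HyK\to g^{-1}HyK$ and the absorption of the $H$-factor are in place. One must be sure that the notion matching the theorem is strong lumpability (the induced process is Markov for all starts), not the weaker property that can hold for one distinguished starting distribution under strictly weaker hypotheses; it is precisely the freedom to start deterministically at each state in turn that lets necessity be extracted, and the tower-property argument that secures sufficiency.
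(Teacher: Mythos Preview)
Your proof is correct and follows essentially the same route as the paper: both invoke the Kemeny--Snell strong-lumpability criterion, compute the block transition probability as $w(g^{-1}HyK)/w(G)$, and then reduce constancy over $g\in HxK$ to the stated condition by writing $g=hxk$ and absorbing $h^{-1}$ into $H$. The only organisational difference is that you handle necessity and sufficiency simultaneously via the ``this set is a singleton iff'' observation, whereas the paper treats the two directions separately.
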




When this condition holds we say that the random walk on $G$ \emph{lumps} on the double cosets $H\backslash G/K$.
In Definition~2.5 of \cite{Pang}, a chain with this property is said to have `strong lumping'.
It is possible for a quotient of a Markov chain to have the Markov property when the original chain
begins at a chosen distribution, but not lump in this strong sense: we refer
the reader to \cite[\S 6.4]{KemenySnell} and \cite{Pang} for further background on strong and weak lumping.

The principal interest of Theorem~\ref{thm:HxK} lies in the following corollary.

\begin{corollary}\label{cor:HxH}
Let $w$ be a weight function on the finite group
$G$ and let $H$ be a subgroup of $G$. A necessary and sufficient condition for the random walk on~$G$ driven by $w$ to
lump on the double cosets $H\backslash G/H$ is that $w(xH)=w(yH)$ whenever $HxH=HyH$. 
\end{corollary}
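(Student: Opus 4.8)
The plan is to derive the corollary directly from Theorem~\ref{thm:HxK} by taking $K=H$ and then transforming the condition it supplies into the stated form. With $K=H$, the theorem asserts that the walk lumps on $H\backslash G/H$ if and only if $w(kxHyH)=w(xHyH)$ for all $x,y\in G$ and all $k\in H$. It therefore suffices to prove that this whole family of equalities is equivalent to the single assertion that $w(xH)=w(yH)$ whenever $HxH=HyH$.

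The first step I would take is to reformulate the stated condition. Left multiplication by $H$ acts on the set $G/H$ of right cosets with orbits exactly the double cosets in $H\backslash G/H$, so $HxH=HyH$ holds precisely when $yH=hxH$ for some $h\in H$. Hence ``$w(xH)=w(yH)$ whenever $HxH=HyH$'' is equivalent to the invariance statement that $w(hxH)=w(xH)$ for all $x\in G$ and $h\in H$. This reformulation is worth recording first, as it turns a statement about double cosets into one about the left $H$-action on right cosets. The forward implication is then immediate: setting $y=1$ in the condition provided by Theorem~\ref{thm:HxK} gives $xHyH=xHH=xH$, a single right coset, so the condition collapses to $w(kxH)=w(xH)$ for all $x$ and all $k\in H$, which is exactly the reformulated condition.

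For the converse I would use that $xHyH$ is a union of right cosets, say $xHyH=\bigcup_i c_i$ with the $c_i$ distinct elements of $G/H$. Left multiplication by $k$ is a bijection of $G$ carrying right cosets to right cosets, so it sends $\{c_i\}$ bijectively onto the distinct right cosets comprising $kxHyH$. Applying the reformulated invariance to each coset gives $w(kc_i)=w(c_i)$, and summing over $i$ yields $w(kxHyH)=\sum_i w(kc_i)=\sum_i w(c_i)=w(xHyH)$, as required.

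The only delicate point, and the one I would treat as the main obstacle, is the bookkeeping in the converse: one must be sure that in writing $xHyH$ as a union of right cosets no coset is double-counted, so that $w(xHyH)$ genuinely equals the sum of $w(c_i)$ over \emph{distinct} cosets. Phrasing the argument through the bijection of $G/H$ induced by left multiplication by $k$ disposes of this automatically, since a bijection carries distinct cosets to distinct cosets and so preserves the count.
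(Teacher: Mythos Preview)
Your proof is correct and follows essentially the same route as the paper: you specialize Theorem~\ref{thm:HxK} to $K=H$, obtain necessity by setting $y=1$, and obtain sufficiency by decomposing $xHyH$ into right $H$-cosets and applying the coset invariance termwise. The paper makes the decomposition explicit via a transversal $\{y_1,\dots,y_t\}$ for the right cosets in $HyH$ (so $xHyH=\bigcup_j xy_jH$), but this is exactly your argument with the cosets named.
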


Often it is preferable to reason with group actions rather than cosets. In this context
Corollary~\ref{cor:HxH} implies the following result.

\begin{corollary}\label{cor:action}
Let $w$ be a weight function on the finite group
$G$. Suppose that $G$ acts transitively on a set $\Omega$. Let $\alpha \in \Omega$ have point stabiliser $H$.
\begin{thmlist}
\item The random walk on $G$ induces a time-homogeneous Markov chain on~$\Omega$ under the quotient map
$x \mapsto \alpha x$.
\item The induced Markov chain on $\Omega$ induces in turn a time-homogeneous
Markov chain on the $H$-orbits on $\Omega$
under the quotient map $\alpha \mapsto \alpha H$ if and only if
the probability of a step from $\beta$ to $\alpha$ is constant for $\beta$ in the same $H$-orbit.
\end{thmlist}
\end{corollary}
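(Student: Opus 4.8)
The plan is to translate everything into the language of right cosets and then reduce to the statements already proved. Since $G$ acts transitively on $\Omega$ with $\alpha$ having stabiliser $H$, the orbit--stabiliser theorem shows that the map $x \mapsto \alpha x$ identifies $\Omega$ with the right coset space $H \backslash G$, sending $\alpha x$ to $Hx$; concretely $\alpha x = \alpha y$ if and only if $xy^{-1} \in H$, i.e. $Hx = Hy$. Under this identification the right action of $H$ on $\Omega$ has orbits $\{\alpha x h : h \in H\}$ corresponding to the double cosets $HxH$, so the $H$-orbits on $\Omega$ are in bijection with $H \backslash G / H$ and the quotient map $\alpha \mapsto \alpha H$ becomes $Hx \mapsto HxH$.

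For part~(i) I would apply Theorem~\ref{thm:HxK} with $K$ the trivial subgroup. Then $H \backslash G / K$ is exactly $H \backslash G \cong \Omega$, and the lumping condition $w(kxHyK) = w(xHyK)$ for all $k \in K$ is vacuous. Hence the walk always lumps onto $\Omega$, and the induced transition probability from $Hx$ to $Hy$ is proportional to $w(x^{-1}Hy)$; I shall write $P$ for these probabilities on $\Omega$.

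For part~(ii) I would establish the equivalence of three statements: (a) the chain $P$ on $\Omega$ lumps onto the $H$-orbits; (b) the probability $P(\beta,\alpha)$ of a step to $\alpha$ is constant for $\beta$ in a fixed $H$-orbit; and (c) the walk on $G$ lumps onto $H \backslash G / H$. The equivalence of (b) and (c) is a direct computation: as $\alpha$ corresponds to $H$, we have $P(Hx,H)$ proportional to $w(x^{-1}H)$, while $\beta = Hx$ and $\beta' = Hx'$ lie in the same $H$-orbit exactly when $HxH = Hx'H$. Writing $u = x^{-1}$, $v = (x')^{-1}$ and using $(HxH)^{-1} = Hx^{-1}H$, statement (b) becomes $w(uH) = w(vH)$ whenever $HuH = HvH$, which is the condition of Corollary~\ref{cor:HxH}. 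The implication from (a) to (b) is immediate: since $H$ fixes $\alpha$, the set $\{\alpha\}$ is a singleton $H$-orbit, and lumpability applied to this block forces $P(\beta,\alpha)$ to be constant on each block.

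The crux is the remaining implication from (c) to (a), which I expect to be the main obstacle. Here I would work with the explicit transition probabilities. For the block corresponding to a double coset $HzH$, summing over the right cosets $Hy \subseteq HzH$ and using that the translates $x^{-1}Hy$ are disjoint gives that the probability of a step from $Hx$ into this block is proportional to $\sum_{Hy \subseteq HzH} w(x^{-1}Hy) = w(x^{-1}HzH)$. Now Theorem~\ref{thm:HxK} applied with $K = H$ shows that, once (c) holds, $w(x^{-1}HzH)$ is a well-defined function of the double cosets $HxH$ and $HzH$, since it is (proportional to) the transition probability of the double coset chain; in particular it is constant as $Hx$ ranges over a fixed $H$-orbit. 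This is precisely the lumpability criterion for the partition of $\Omega$ into $H$-orbits, yielding (a). Combining the three implications gives the equivalence of (a) and (b), which is the assertion of part~(ii).
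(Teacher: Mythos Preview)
Your proof is correct and follows essentially the same route as the paper: both part~(i) arguments apply Theorem~\ref{thm:HxK} with $K$ trivial, and both part~(ii) arguments identify the $H$-orbits on $\Omega$ with the double cosets $H\backslash G/H$ and reduce the claimed equivalence to the condition of Corollary~\ref{cor:HxH}. The paper's proof is terser---it simply shows that the condition $w(xH)=w(yH)$ for $HxH=HyH$ is equivalent to constancy of $p(\beta,\alpha)$ on $H$-orbits and leaves the identification of ``$\Omega$-chain lumps on $H$-orbits'' with ``$G$-walk lumps on $H\backslash G/H$'' implicit---whereas you make that step explicit via your implication $(c)\Rightarrow(a)$, which is a welcome clarification but not a different method.
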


We prove these results in \S 2.
We remark that if $C$ is a conjugacy closed subset of the finite group $G$ then,
given two left cosets $xH$ and $hxH$ in the same double coset $HxH$, the map
$y \mapsto h y h^{-1}$ is a bijection between $C \cap xH$ and $C \cap hxH$. 
This proves the well known result that $C$ meets
each left coset $hxH$ in $HxH$ in the same number of elements. Thus
the special case 
$w(xH) = |C \cap xH|$ of Corollary~\ref{cor:HxH} is the motivating
result of Diaconis, Ram and Simper.
More generally, if $w$ is constant on the $K$-conjugacy classes in~$G$ then
\[ w(kxHyK) = w\bigl( (kxHyK)^k \bigr) = w(xHyKk) = w(xHyK) \]
as required in Theorem~\ref{thm:HxK};
this is the
sufficient condition for the random walk on $G$
to lump on the double cosets $H\backslash G/ K$
in Proposition 2.4 of \cite{DRS}.

As an application of Corollary~\ref{cor:action} 
we analyse a particular random walk on 
the symmetric group 
$\Sym_n$ related to the $r$-top to random shuffle studied in \cite{BWBell,
DFP, FulmanCardShuffling, Phatarfod}, in the special case $r=1$ in \cite[\S 3]{RobbinsBolker}
and \cite[Example 1]{BrownDiaconis}, and in a version permitting varying $r$ in \cite{BoardmanRudolfSaloffCoste}.
(These references are intended to be representative, particularly of recent work, but not exhaustive.)
 Let $r \in \N$ and let $n \ge 2r$.
In each step of the \emph{skewed $r$-random to top shuffle} on a deck
of $n$ cards, we make a uniform at
random selection of $r$ cards, none of them from the top~$r$ cards in the deck, 
and we move these~$r$ cards to the top (preserving their order).  
Using Corollary~\ref{cor:action}, we 
show that the shuffle induces a Markov chain on the double cosets $H\backslash G/H$
where $H$ is the subgroup $\Sym_r\times \Sym_{n-r}$ of $\Sym_n$. 
The set of possible shuffles is not a union of $H$-conjugacy classes:
therefore the full power of our necessary and sufficient condition for double coset lumping is required.
(We note however that this lumping on double cosets of $\Sym_r\times \Sym_{n-r}$ 
appears in \cite[\S 5]{DSim} and \cite[\S 8.2]{DRS} and Simper \cite{Simper} has studied it 
in the case where the driving set is all transpositions in $\Sym_n$.)
In \S 3 we find the transition matrix for the lumped
Markov chain, show that it is an involutive random walk in the sense of \cite{BWInv}, and thereby prove that it is irreducible, recurrent,  reversible and ergodic; 
we find its invariant distribution and its eigenvalues and hence its rate of convergence.
\emph{En route} we show that its transition matrix has the anti-diagonal eigenvalue property
studied in \cite{BWInv} and \cite{OchiaiEtAl}: this accounts for the remarkable spectral behaviour seen in~$(\star\star)$
before Example~\ref{ex:ex} and in Example~\ref{ex:ctd}.

\section{Proof of Theorems~\ref{thm:HxK}, Corollary~\ref{cor:HxH} and Corollary~\ref{cor:action}}

We shall require the following general fact about quotients of Markov chains,
proved as Theorem 6.4.1 in \cite{KemenySnell}. 

\begin{lemma}\label{lemma:Markov}
Let $M$ be a Markov chain on a set $\Omega$, and let $\mathcal{P}$ be a partition of $\Omega$ into subsets $\mathcal{P}_1,\dots, \mathcal{P}_t$. For $x\in \Omega$ and $j\in\{1,\dots,t\}$, let $p(x,\mathcal{P}_j)$ be 
the probability that $x$ transitions under $M$ to an element of $\mathcal{P}_j$. A necessary and 
sufficient condition for $M$ to induce a 
time-homogeneous Markov chain on the parts of $\mathcal{P}$, started
at an arbitrary initial distribution, is that $p(x,\mathcal{P}_j)=p(y,\mathcal{P}_j)$ 
whenever $x$ and $y$ lie in the same part of $\mathcal{P}$. 
\end{lemma}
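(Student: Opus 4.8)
The plan is to prove both implications directly from the definition of the lumped process. Write $P(x,y)$ for the transition probabilities of $M$, let $\mu$ be an arbitrary initial distribution on $\Omega$, and let $X_0, X_1, X_2, \dots$ denote the chain. Let $\pi\colon \Omega \to \{1,\dots,t\}$ send each state to the index of its part, so that $p(x,\mathcal{P}_j) = \sum_{y \in \mathcal{P}_j} P(x,y)$, and set $Y_n = \pi(X_n)$. Throughout I read ``induces a time-homogeneous Markov chain, started at an arbitrary initial distribution'' in the strong sense: there is a single stochastic matrix on $\{1,\dots,t\}$ with respect to which $(Y_n)$ is Markov for \emph{every} choice of $\mu$.

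For sufficiency, suppose $p(x,\mathcal{P}_j)$ depends only on the part containing $x$, and write $\widehat p(i,j)$ for this common value when $x \in \mathcal{P}_i$. I would compute the finite-dimensional distributions of $(Y_n)$ directly: for indices $i_0,\dots,i_n$,
\[
\P(Y_0 = i_0, \dots, Y_n = i_n) = \sum_{x_0 \in \mathcal{P}_{i_0}} \cdots \sum_{x_n \in \mathcal{P}_{i_n}} \mu(x_0)\, P(x_0,x_1)\cdots P(x_{n-1},x_n).
\]
The key step is that the innermost sum over $x_n$ collapses: $\sum_{x_n \in \mathcal{P}_{i_n}} P(x_{n-1}, x_n) = p(x_{n-1}, \mathcal{P}_{i_n}) = \widehat p(i_{n-1}, i_n)$, and this value no longer depends on $x_{n-1}$ precisely because $x_{n-1} \in \mathcal{P}_{i_{n-1}}$. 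Being constant, it factors out of the remaining sums, which then recombine to give
\[
\P(Y_0 = i_0, \dots, Y_n = i_n) = \widehat p(i_{n-1}, i_n)\, \P(Y_0 = i_0, \dots, Y_{n-1} = i_{n-1}).
\]
Dividing by $\P(Y_0 = i_0, \dots, Y_{n-1} = i_{n-1})$ whenever this is positive yields $\P(Y_n = i_n \mid Y_0 = i_0, \dots, Y_{n-1} = i_{n-1}) = \widehat p(i_{n-1}, i_n)$. Since the right-hand side depends only on $i_{n-1}$ and $i_n$, and not on $n$ or the earlier history, $(Y_n)$ is a time-homogeneous Markov chain with transition matrix $\widehat p$, for every $\mu$.

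For necessity, suppose $(Y_n)$ is, for every $\mu$, Markov with respect to a common transition matrix $\widehat p$. Fix a part $\mathcal{P}_i$ and a state $x \in \mathcal{P}_i$, and take $\mu$ to be the point mass at $x$. Then $Y_0 = i$ with probability $1$, and a one-step computation gives $\P(Y_1 = j \mid Y_0 = i) = \P(Y_1 = j) = p(x, \mathcal{P}_j)$; time-homogeneous Markovianity forces this to equal $\widehat p(i,j)$. Repeating the argument with a second state $x' \in \mathcal{P}_i$ gives $p(x', \mathcal{P}_j) = \widehat p(i,j)$ as well, whence $p(x, \mathcal{P}_j) = p(x', \mathcal{P}_j)$, which is the asserted condition.

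The main obstacle is conceptual rather than computational: the necessity direction genuinely requires the freedom to vary the initial distribution. For a single fixed start the lumped process may be Markov even when the condition fails -- this is the distinction between weak and strong lumping recorded after Theorem~\ref{thm:HxK} -- so the argument must exploit deterministic starts at individual microstates to force the transition probabilities out of each part to be well defined. A secondary point to treat carefully is the conditioning on histories of zero probability in the sufficiency argument, which causes no difficulty since the Markov identity is only required on events of positive probability.
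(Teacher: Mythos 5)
Your proof is correct, and it is essentially the standard argument: the paper does not prove this lemma itself but cites it as Theorem 6.4.1 of Kemeny and Snell, where the proof runs exactly along your lines (collapsing the innermost sum in the finite-dimensional distributions for sufficiency, and deterministic starts at individual microstates for necessity). Your explicit remark that necessity needs the freedom to vary the initial distribution, with the strong reading of ``started at an arbitrary initial distribution'' matching Kemeny and Snell's definition of lumpability, correctly identifies the one genuinely delicate point.
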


\subsection{Proof of Theorem~\ref{thm:HxK}}
 Let $G$ be a finite group, and let $w:G\rightarrow \mathbb{R}_{\ge 0}$ be a weight function,
 normalized so that it is a probability measure. 
Let $M$ be the random walk on $G$ driven by $w$. Let~$H$ and $K$ be subgroups of $G$. 

Let $x$, $y\in G$. The probability $p(x,HyK)$ that $x$ transitions to an element of $HyK$ under $M$ is $w(x^{-1}HyK)$. By Lemma~\ref{lemma:Markov}, the random walk~$M$ induces a Markov chain on the double cosets $H\backslash G/K$ if and only if $p(x',HyK) = p(x,HyK)$ whenever $x'\in HxK$.   

Suppose that for all $x$, $y\in G$ and $k\in K$ we have $w(kxHyK)=w(xHyK)$. Suppose $g$, $g'\in G$ are such that 
$g'\in HgK$. Then we can write $g'=hgk^{-1}$ for some $h\in H$ and $k\in K$. For any $y\in G$ we have
\[
p(g',HyK) = w({g'}^{-1}HyK) = w(kg^{-1}h^{-1}HyK) = w(kg^{-1}HyK)
\]
which by supposition is equal to $w(g^{-1}HyK)$. So $p(g',HyK)=p(g,HyK)$, and it follows that  $M$ induces a Markov chain on $H\backslash G/K$. 

For the converse, suppose that there exist $x, y\in G$ and $k\in K$ such that $w(kxHyK)\ne w(xHyK)$. Then $p(x^{-1},HyK)\neq p(x^{-1}k^{-1}, HyK)$. But clearly $x^{-1}k^{-1}\in Hx^{-1}K$, and so the condition from Lemma~\ref{lemma:Markov} is not satisfied; hence $M$ does not induce a Markov chain on $H\backslash G/K$ in this case. This completes the proof of Theorem~\ref{thm:HxK}. 
\medskip

\subsection{Proof of Corollary~\ref{cor:HxH}}
Suppose that situation of the previous subsection obtains, but with the additional assumption that $K=H$. Suppose that~$M$ induces a Markov chain on $H\backslash G/H$. Then $w(kxHyH)=w(xHyH)$ for all $x,y\in G$ and $k\in H$, by Theorem~\ref{thm:HxK}. In the case that $y=1$, we have $w(kxH)=w(xH)$ for all $x\in G$ and $k\in H$. But for $y\in G$,  there exists $k\in H$ such that $yH=kxH$ if and only if $HxH = HyH$. This proves the necessity of the condition stated in the corollary.

For sufficiency of that condition, suppose that $w(xH)=w(yH)$ whenever $HxH=HyH$. Then $w(kxH)=w(xH)$ for all $x\in G$ and $k\in H$. Let $y\in G$. Letting $\{y_1,\dots, y_t\}$ be a transversal for the left cosets of $H$ in $HyH$, so
\[
HyH = \dot\bigcup_{j=1}^t y_jH,
\]
we see that
\[
w(kxHyH) = \sum_{j=1}^t w(kxy_jH) = \sum_{j=1}^t w(xy_jH) = w(xHyH).
\]
It now follows from Theorem~\ref{thm:HxK} that $M$ induces a Markov chain on $H\backslash G/H$, and the proof is complete.

\subsection{Proof of Corollary~\ref{cor:action}}
For (i), observe that if $K = \{1\}$ then the necessary and sufficient condition in Theorem~\ref{thm:HxK}
becomes $w(kxHy) = w(xHy)$ for all $k \in K$; this obviously holds. Therefore the random walk on $G$
induces a Markov chain on the right cosets $H \backslash G$, and since $H$ is the point-stabiliser of $\alpha$ in the
right action of $G$ on $\Omega$, the state space is isomorphic to $\Omega$ by the map $Hx \mapsto
\alpha x$.

For (ii), let $p(\omega, \alpha)$ be the probability of a step from $\omega$ to $\alpha$.
The coset~$xH$ is precisely those elements of $G$ that send $\alpha x^{-1}$ to $\alpha$.
Suppose that $w(xH) = w(yH)$ for all $x, y$ such that $HxH = HyH$.
Then, by setting $\beta = \alpha x^{-1}$ and $\gamma = \alpha y^{-1}$, we
see that $p(\beta,\alpha) = p(\gamma,\alpha)$ for all $\beta, \gamma$ in the same $H$-orbit.
The converse follows by reversing this argument.

\section{The skewed $r$-random to top shuffle}

Before introducing this shuffle, we review some results from \cite{BWInv} 
on weighted involutory walks which, remarkably enough, turn out to be closely connected.

\subsection{Involutory  walks}\label{subsec:involutive}
These walks, which can be defined on any poset~$P$ admitting an order-reversing involution~$\star$, have steps defined 
by two parts: starting at $x \in P$, a random point $y \le x$ is chosen
with probability prescribed by a weight function $\gamma$ on the intervals $[y,x] = \{u \in P 
: y \le u \le x\}$ of $P$; 
the step is completed by moving deterministically to $y^\star$.
The $\gamma$-weighted involutory walk on $P$ therefore transitions from $x$ to $y^\star$ only in the case that $y\le x$, and this transition has probability
\[
p(x,y^\star)  = \frac{\gamma_{[y,x]}}{N(\gamma)_x},
\]
where $N(\gamma)_x = \sum_{z\le x}\gamma_{[z,x]}$. 
We require that for each $x \in P$ there exists $y \le x$
such that $\gamma_{[y,x]}>0$. Thus $N(\gamma)_x > 0$ and the process is well-defined.
An important observation is that if~$F$ is a positive-valued function 
on~$P$ and~$\gamma'$ is the weight function
defined by $\gamma'_{[y,x]} =F(x)\gamma_{[y,x]}$ then 
$\gamma$ and~$\gamma'$
define the same weighted involutive walk, since
$N(\gamma')_x = F(x)N(\gamma)_x$ and so~$F(x)$ is cancelled under normalization. 

For $r\in \N$,
we consider the poset $\{0,1,\ldots,r\}$ with the natural order and anti-involution $x^\star = r-x$.
We require a two parameter family of 
weights on this poset, indexed by $a$, $b \in \N$ where $a \ge r$, and defined by
\[
\deltaP^{(a,b)}_{[y,x]} = {a \choose y}{b\choose x-y}.
\]
Observe that, by the condition on $a$, we have $N(\deltaP^{(a,b)})_x > 0$ for all $x \in \{0,1,\ldots, r\}$. 
For example, \smash{$\deltaP^{(a,1)}_{[x,x]} = \binom{a}{x}$}, \smash{$\deltaP^{(a,1)}_{[x-1,x]} = \binom{a}{x-1}$}
and \smash{$\deltaP^{(a,1)}_{[y,x]} = 0$} for $y \le x-2$.
In the $\deltaP^{(a,1)}$-weighted involutive walk on $\{0,1,\ldots, r\}$ we therefore have
$p(x,x^\star) = \binom{a}{x}/ \binom{a+1}{x} = (a-x+1)/(a+1)$, $p(x, (x-1)^\star) = 
\binom{a}{x-1}/\binom{a+1}{x}  = x/(a+1)$, and no other steps from $x$ are possible.

In the following lemma, we briefly summarise some of the results implied by  \cite{BWInv} on the 
$\deltaP^{(a,b)}$-weighted involutory walk on $\{0,1,\ldots,r\}$. We say that an $r\times r$ matrix~$A$ 
with rows and columns indexed by $\{0,1,\ldots, r\}$ with $(0,0)$ in the bottom right
has the \emph{anti-diagonal eigenvalue property} if $A_{ij}=0$ whenever $i+j > r$ (that is, entries strictly
to the left of the anti-diagonal are zero), and  the eigenvalues of $A$ are $(-1)^jA_{j(r-j)}$ for 
$j \in \{0,1,\ldots,r\}$.
See Example~\ref{ex:ex} for an example of such a matrix.

\begin{lemma}\label{lemma:delta}
Let $a, b \in \N$ be such that $a \ge r$. The $\deltaP^{(a,b)}$-weighted involutory walk on $\{0,1,\ldots,r\}$  is irreducible, recurrent, reversible and ergodic. It has unique invariant distribution
proportional to
\[
\pi_x={a\choose r-x}{a+b\choose x}.
\]
Its transition matrix has the anti-diagonal eigenvalue property, and its
 eigenvalues are $(-1)^j{a\choose j}/{a+b\choose j}$ for $j \in \{0,1,\ldots,r\}$. 
\end{lemma}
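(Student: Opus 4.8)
The plan is to prove each of the four assertions (invariance, irreducibility/recurrence/reversibility/ergodicity, the anti-diagonal eigenvalue property, and the eigenvalue formula) by reducing to the general theory of $\deltaP$-weighted involutory walks developed in \cite{BWInv}, and then verifying the specific combinatorial identities forced by the binomial weights $\deltaP^{(a,b)}_{[y,x]} = \binom{a}{y}\binom{b}{x-y}$. First I would write down the transition matrix explicitly. Since the step from $x$ first selects $y \le x$ with probability proportional to $\deltaP^{(a,b)}_{[y,x]}$ and then moves to $y^\star = r-y$, the transition probability is
\[
p(x, r-y) = \frac{\binom{a}{y}\binom{b}{x-y}}{N_x}, \qquad N_x = \sum_{z \le x}\binom{a}{z}\binom{b}{x-z} = \binom{a+b}{x},
\]
where the evaluation of $N_x$ is Vandermonde's identity. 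This already gives a clean closed form for every entry of the transition matrix.

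Next I would treat reversibility, which is the linchpin: if I can exhibit a distribution $\pi$ satisfying the detailed balance equations $\pi_x\, p(x, r-y) = \pi_{r-y}\, p(r-y, \cdot)$, then invariance follows immediately, and the general results of \cite{BWInv} deliver irreducibility, recurrence and ergodicity for this class of involutory walks once the weights are positive where required (guaranteed here by $a \ge r$). So I would verify that $\pi_x = \binom{a}{r-x}\binom{a+b}{x}$ satisfies detailed balance. Substituting the closed forms, the balance condition reduces to a symmetry identity among products of binomial coefficients under the involution $x \mapsto r-x$; I expect this to follow from repeated use of the symmetry $\binom{n}{k} = \binom{n}{n-k}$ together with the absorption identity $\binom{a+b}{x}\binom{x}{y} = \binom{a+b}{y}\binom{a+b-y}{x-y}$, after cancelling the common normalizing factors. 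The main obstacle will be bookkeeping the involution correctly: the transition is from $x$ to $r-y$, so detailed balance pairs the interval $[y,x]$ with the ``reflected'' interval, and one must track how the indices transform under $\star$ so that the binomial products match up; getting the reflected index substitution right is where an error is most likely to creep in.

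For the anti-diagonal eigenvalue property I would first check the support condition $A_{ij}=0$ for $i+j>r$. With the stated indexing convention (rows and columns labelled by $\{0,\ldots,r\}$, with $(0,0)$ at the bottom right), the entry in position $(i,j)$ corresponds to a transition $x \mapsto r-y$ with $x$ and $r-y$ read off from $i,j$; the constraint $y \le x$ translates precisely into the vanishing of entries strictly left of the anti-diagonal. The anti-diagonal entries themselves are then $A_{j(r-j)} = p(x, r-y)$ in the case $y = x$, namely $\binom{a}{x}/\binom{a+b}{x}$ after using Vandermonde for $N_x$. I would then invoke the anti-diagonal eigenvalue theorem of \cite{BWInv}, which asserts that for involutory walks of this form the eigenvalues are exactly $(-1)^j A_{j(r-j)}$; this reduces the eigenvalue claim to identifying the anti-diagonal entries, giving eigenvalues $(-1)^j\binom{a}{j}/\binom{a+b}{j}$ for $j \in \{0,1,\ldots,r\}$, as required. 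The genuinely new content of the lemma is thus the two Vandermonde-type evaluations (for $N_x$ and for $\pi$) plus the detailed-balance verification; everything structural is inherited from \cite{BWInv}.
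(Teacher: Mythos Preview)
Your approach is correct but does considerably more work than the paper's proof, which is a single line: it observes that in the notation of \cite[(1.4)]{BWInv} the weight $\deltaP^{(a,b)}$ here is exactly the weight denoted $\delta^{(a+1,b+1)}$ there, and then cites Theorem~1.5 of \cite{BWInv} for every claim at once. In other words, the specific binomial family you are analysing is already treated as a named example in \cite{BWInv}, so the invariant distribution, reversibility, the anti-diagonal eigenvalue property, and the eigenvalue formula all come for free from that theorem without any fresh computation. Your plan to reverify Vandermonde for $N_x$, detailed balance for $\pi$, and the anti-diagonal entries is sound and yields the same conclusions, but it is duplicating work already packaged in the cited result.

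One remark on the part you flag as the main obstacle: the detailed balance check is in fact much easier than you anticipate. With $N_x=\binom{a+b}{x}$ and $\pi_x=\binom{a}{r-x}\binom{a+b}{x}$, the normalizers cancel immediately, and $\pi_x\,p(x,r-y)=\binom{a}{r-x}\binom{a}{y}\binom{b}{x-y}$, while $\pi_{r-y}\,p(r-y,x)=\binom{a}{y}\binom{a}{r-x}\binom{b}{x-y}$; these are identical with no need for absorption identities or delicate index tracking. So if you do write it out, the argument collapses to two lines rather than the careful bookkeeping you were bracing for.
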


\begin{proof}
In the notation of \cite[(1.4)]{BWInv} 
$\deltaP^{(a,b)}$ is the weight denoted $\delta^{(a+1,b+1)}$.
Every claim in the lemma therefore follows from Theorem 1.5 in \cite{BWInv}.
\end{proof}

The walks related to the skewed $r$-random to top shuffle
that we consider here differ from the involutory walks in~\cite{BWInv} in that each step involves an $\emph{up-step}$ (rather than a down-step) before applying the involution. This is merely a consequence of the natural direction of the Markov chain; it is clear that it makes no important difference to the theory, and only slight difference to the notation. We shall write $\gamma_{[x,y]}$ rather than $\gamma_{[y,x]}$ for weights (preserving the convention that $x$ transitions to~$y$). The appropriate weight corresponding to $\deltaP^{(a,b)}$ in this context, obtained by applying the $\star$-involution,~is
\[
\overline\deltaP^{(a,b)}_{[x,y]} = {a \choose r-y}{b\choose y-x},
\]  
which has all the properties ascribed to $\deltaP^{(a,b)}$ in Lemma~\ref{lemma:delta}.

\subsection{Analysis of the skewed $r$-random to top shuffle}

Let $n$, $r\in \N$ with $n > 2r$. We consider shuffles of a deck of $n$ cards, 
with positions ordered from $1$ at the top to $n$ at the bottom.
A single \emph{skewed $r$-random to top shuffle} 
is as follows: $r$ cards lying strictly beneath position $r$ are chosen uniformly at random, and brought to the top of the deck, maintaining their order. Let~$D$ be the set of skewed $r$-random to top shuffles.

We are grateful to an anonymous referee for part of the following remark.

\begin{remark}
Consider the following
shuffle on an $n$ card deck. First cut the deck into three piles $A$, $B$, $C$, where $A$ is the top $r$ cards,
$B$ is the next $r$ cards, and $C$ is the remaining bottom $n-2r$ cards.
Riffle shuffle $A$ and $C$ in the usual way, with probabilities given by 
Gilbert--Shannon--Reeds model (see for instance \cite{BayerDiaconis}),
and then drop pile $B$ on top. Since in the inverse of a skewed $r$-random to top shuffle, the top $r$ cards
are moved to random positions in $\{r+1,\ldots, 2r\}$, maintaining their order,
this procedure performs an inverse skewed $r$-random to top shuffle.
\end{remark}

Repeated shuffles by~$D$ define the random walk on $\Sym_n$
driven by the weight $w$ where
\[
w(g) = \begin{cases} 1 & \text{if $g \in D$} \\ 0 & \text{otherwise.} \end{cases}
\]
Let $H = \Sym_r \times \Sym_{n-r}$ be the stabiliser of the ordered set partition
$\bigl( \{1,\dots, r\}, \{r+1,\dots,n\} \bigr)$. Observe that $H \backslash \Sym_r$
is in  bijection with the set of $r$-subsets of $\{1,\ldots, n\}$ by 
the map $Hg \mapsto \{1,\ldots, r\} g$. 
The double cosets in $H \backslash \Sym_n / H$ are indexed by $\{0,1,\ldots, r\}$: 
the double coset $HgH$ indexed by~$x$ contains exactly
those $g \in \Sym_n$ such that $\{1,\dots, r\}g\,\cap\, \{1,\dots, r\}$ has size~$x$. 
We say that an element of the double coset indexed by $x \in \{0,1,\ldots, r\}$ has \emph{type}~$x$.
For each $r$-subset $X$ of $\{1,\ldots, n\}$
not meeting $\{1,\ldots, r\}$ there exists a unique
$g \in D$ such that $Xg = \{1,\ldots, r\}$, while if $X$ meets $\{1,\ldots, r\}$
then $Xg \not= \{1,\ldots, r\}$ for any $g \in D$.
Therefore, by Corollary~\ref{cor:action}(i) and (ii), the skewed $r$-random to top shuffle
induces a Markov chain $M$ on the double cosets of $H$ in~$\Sym_n$.

We shall establish the following result.

\begin{theorem}\label{thm:involutive}
The Markov chain $M$ is a $\overline\deltaP^{(r,n-2r)}$-weighted involutory walk on 
the set of types $\{0,1,\ldots,r\}$.
\end{theorem}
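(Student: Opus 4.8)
The plan is to prove the theorem by computing the transition probabilities of $M$ explicitly and checking that they coincide with those of the $\overline\deltaP^{(r,n-2r)}$-weighted involutory walk; since two Markov chains on the state set $\{0,1,\ldots,r\}$ with the same transition probabilities are the same chain, this is enough. Write $T=\{1,\ldots,r\}$ and $B=\{r+1,\ldots,n\}$ for the two blocks. I would represent the state of type $x$ by any $r$-subset $S$ of $\{1,\ldots,n\}$ with $|S\cap T|=x$, so that $|S\cap B|=r-x$; because $M$ lumps (Corollary~\ref{cor:action}), the transition probabilities computed this way will not depend on the choice of representative $S$.

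First I would analyse a single step. Each shuffle $g\in D$ is determined by the $r$-subset $X=Tg^{-1}\subseteq B$ that it lifts to the top, and $g\mapsto X$ is a bijection from $D$ onto the $r$-subsets of $B$; hence choosing $g\in D$ uniformly is the same as choosing $X\subseteq B$ uniformly. Since $g$ maps $X$ onto $T$ and its complement onto $B$, an element $s\in S$ satisfies $sg\in T$ exactly when $s\in X$, so the new type is $|Sg\cap T|=|S\cap X|=|(S\cap B)\cap X|$. As $S\cap B$ is a fixed $(r-x)$-subset of the $(n-r)$-element set $B$ and $X$ is a uniform $r$-subset of $B$, this count is hypergeometric, giving
\[
p_M(x,z)=\frac{\binom{r-x}{z}\binom{n-2r+x}{r-z}}{\binom{n-r}{r}},
\]
where the Vandermonde identity confirms that the numerators sum over $z$ to $\binom{n-r}{r}$.

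It then remains to match this with the involutory walk. A step of the $\overline\deltaP^{(r,n-2r)}$-weighted walk from $x$ makes an up-step to some $u\ge x$ with weight $\overline\deltaP^{(r,n-2r)}_{[x,u]}$ and then applies the involution $u\mapsto u^\star=r-u$; writing $u=r-z$, the transition $x\to z$ therefore carries weight $\overline\deltaP^{(r,n-2r)}_{[x,r-z]}=\binom{r}{z}\binom{n-2r}{r-x-z}$. The two weight expressions $\binom{r-x}{z}\binom{n-2r+x}{r-z}$ and $\binom{r}{z}\binom{n-2r}{r-x-z}$ are precisely the two sides of the symmetry of the hypergeometric distribution under interchanging the fixed special set (of size $r-x$) with the drawn set (of size $r$) in the population $B$; equivalently, their ratio $\tfrac{(r-x)!\,(n-2r+x)!}{r!\,(n-2r)!}$ depends only on $x$. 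By the observation in \S\ref{subsec:involutive} that multiplying a weight function by a positive function of the starting point leaves the induced walk unchanged, it follows that $M$ is exactly the $\overline\deltaP^{(r,n-2r)}$-weighted involutory walk.

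I expect the main obstacle to be the bookkeeping of the second paragraph: correctly associating to each shuffle the subset $X$ it lifts, and verifying the identity $|Sg\cap T|=|S\cap X|$, on which everything else rests. The remaining point of care is recognising that the hypergeometric probability produced by the shuffle is the same distribution as the one coming from $\overline\deltaP^{(r,n-2r)}$; once the ratio of the two weights is seen to be independent of $z$, the identification — and hence, via Lemma~\ref{lemma:delta}, all the asserted spectral data — follows at once.
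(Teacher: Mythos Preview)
Your argument is correct and follows essentially the same route as the paper's proof: both compute the transition probability from type $x$ to type $z$ as $\binom{r-x}{z}\binom{n-2r+x}{r-z}/\binom{n-r}{r}$ by a hypergeometric count, and then identify the walk with the $\overline\deltaP^{(r,n-2r)}$-weighted involutory walk via the rescaling observation from \S\ref{subsec:involutive}. The only cosmetic difference is that you work directly with the $r$-subset $S$ and show the ratio of the two weights is a function of $x$ alone, whereas the paper works with a permutation representative and exhibits the explicit factor $F(x)=r!(n-2r)!/\bigl((r-x)!(n-2r+x)!\bigr)$.
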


\begin{proof}
Let $x$ and $z$ be types in $\{0,1,\ldots, r\}$; we consider the probability of transition from $x$ to $z$ in the Markov chain $M$. Let $g \in \Sym_n$ be a permutation of type~$x$
and let $d \in D$ be such that $gd$ has type $z$.
 There are $r-x$ points in $\{1,\dots, r\}$ whose image under $g$ lies in $\{r+1,\dots,n\}$. 
 If $X_g$ is the set $\{1,\dots,r\}g\cap\{r+1,\dots,n\}$, and if $R_d$ is the set of $r$ cards moved to the top by $d$, then $R_d$ must intersect in exactly $z$ points with $X_g$. The number of elements $d\in D$ satisfying this condition is 
\[
{r-x \choose z}{n-2r+x \choose r-z}, \tag{$\star$}
\]
since $|X_g|=r-x$ and the elements of $R_d$ 
not in $X_g$ are drawn from $\{r+1,\dots,n\}\setminus X_g$, which has size $(n-r) - (r-x)$.
The probability of transitioning from $x$ to $z$ is obtained by dividing the quantity
in ($\star$) by $|D|={n-r \choose r}$. 

Note that the transition probability is $0$ unless $z\le r-x$, or equivalently, $r-z \ge x$. Note also that the map $j\mapsto r-j$ is an order-reversing involution on $\{0,1,\ldots,r\}$. We can consider 
a single step of the walk on $\{0,1,\ldots,r\}$ as a two-part process: first transition from $x$ to an element $y$ in the up-set of~$x$; then transition from $y$ to $z=r-y$. The walk is therefore a $\gamma$-weighted involutory walk 
(defined using up-steps as explained in \S\ref{subsec:involutive}).

A weight function $\gamma$ on intervals for this walk
is obtained by substituting~$y$ for $r-z$ in ($\star$): by the observation on scaling earlier, we can take 
\[
\gamma_{[x,y]} = {r-x \choose r-y}{n-2r+x \choose y}F(x),
\]
where $F(x)$ is any positive-valued function of $x$. In particular, if we take
\[
F(x) = \frac{r!(n-2r)!}{(r-x)!(n-2r+x)!},
\]
then we obtain the weight
\[
\gamma_{[x,y]} = \frac{r!(n-2r)!}{(r-y)!(y-x)!y!\bigl( n-2r+x-y\bigr)!} = {r \choose y}{n-2r \choose y - x}.
\]
Hence $\gamma=\overline\deltaP^{(r,n-2r)}$, and this completes the proof.
\end{proof}

It now follows from Lemma~\ref{lemma:delta} that the the Markov chain
$M$ is reversible, irreducible, recurrent and ergodic.
It is notable that $M$ is reversible: note that the random walk
on $\Sym_n$ from which $M$ is induced is \emph{not} reversible because $D$ is not closed
under taking inverses. Likewise the walk on $r$-subsets of $\{1,\ldots, r\}$ it induces 
again has constant invariant distribution and so is not reversible, by consideration of the detailed balanced
equations. 
By Lemma~\ref{lemma:delta},
the transition matrix of $M$ has the anti-diagonal eigenvalue property, and  its eigenvalues 
are 
\[ 
\frac{ (-1)^{r-x}{r\choose x}}{{n-r\choose r-x}} \tag{$\star\star$}
\]
for $x\in \{0,1,\ldots,r\}$. Its unique invariant distribution
$\pi_x$ is proportional to ${a\choose r-x}{a+b\choose x}$.
The scaling factor is, by Vandermonde's convolution, $\binom{2a+b}{r}$.

In particular the second largest eigenvalue in absolute value of the transition matrix
is~$-r/(n-r)$. This controls the rate of convergence
of $M$. More precisely, by Corollary~12.7 in~\cite{LevinPeres},
\[ \lim_{t \rightarrow \infty}\, \Bigl|\Bigl| \frac{\P[M_t = x]}{\pi_x} - 1 \Bigr|\Bigr|^{1/t}
 = \frac{r}{n-r} \]
for any $x \in \{0,1,\ldots, r\}$. Using a general
result on reversible Markov chains which, to make the article self-contained, we now set up,
we can prove something much stronger. (We refer the reader to 
\cite[\S 4]{Rosenthal} for background and to \cite{DSal} for 
extensions to this argument.)
Let $\langle - , -\rangle$ be the inner product on $\mathbb{R}^{r+1}$, indexing positions from $0$, defined by
\[ \langle \theta, \phi \rangle = \sum_{z=0}^r \frac{\theta_z \phi_z}{\pi_z}. \]
Since $M$ is reversible, its transition matrix $Q$ satisfies
the detailed balance equations $\pi_x Q_{xy} = \pi_y Q_{yx}$ for all $x, y \in \{0,1,\ldots, r\}$.
Therefore 
\[ \langle \theta Q, \phi \rangle = \sum_{y=0}^r \sum_{x=0}^r \theta_x \phi_y \frac{Q_{xy}}{\pi_y}
= \sum_{x=0}^r \sum_{y=0}^r  \theta_x \phi_y \frac{Q_{yx}}{\pi_x} = \langle \theta, \phi Q \rangle \]
and so right multiplication by $Q$ is self-adjoint with respect to $\langle -, -\rangle$.
In particular the right eigenvectors of $Q$ are orthogonal. Since $\pi$ spans the $1$-eigenspace
we have $\sum_{x=0}^r \phi_x = \langle \pi, \phi \rangle = 0$ for any right-eigenvector
$\phi$ not proportional to $\pi$. Hence, given an arbitrary starting distribution 
$\theta$, there exist unique scalars $c_\lambda \in \mathbb{R}$ such that
$\theta = \pi + \sum_{\lambda \not = 1} c_\lambda \phi^{(\lambda)}$
where the sum is over the non-unit eigenvalues from~($\star\star$)
and \smash{$\phi^{(\lambda)}$} is a right-eigenvector of $Q$ of eigenvalue $\lambda$ of norm $1$
in $\langle -, -\rangle$. Applying $Q^t$ we obtain
$\bigl\langle \theta Q^t - \pi, \theta Q^t - \pi \bigr\rangle =
\sum_{\lambda \not=1} c_\lambda^2 \lambda^{2t}$. 
Thus using the definition of $\langle -, -\rangle$ 
and $(\theta Q^t)_x = \P[M_t = x | M_0 \sim \theta]$, we obtain the sharp equality
\[ \sum_{x=0}^r  \frac{( \P[M_t = x | M_0 \sim \theta]- \pi_x)^2}{\pi_x} =
\sum_{\lambda\not=1} c_\lambda^2 \lambda^{2t}. \tag{$\ddagger$}\]
Therefore, for each $x$, we have
$(\P[M_t = x | M_0 \sim \theta] - \pi_x)^2 \le 
\pi_x \sum_{\lambda \not=1} c_\lambda^2 \lambda^{2t}. $
Taking the square-root and summing over $x$ using $\sum_{x=0}^r \sqrt{\pi_x} \le \sqrt{r+1}$, we get
\[ \Bigl|\Bigl| \P[M_t = x | M_0 \sim \theta]_{0 \le x \le r} - \pi \Bigr|\Bigr|_{\mathrm{TV}} \le 
\frac{\sqrt{r+1}}{2} \sqrt{\,\sum_{\lambda \not=1} c_\lambda^2 \lambda^{2t}}. \]
This gives an optimal bound, up to the constant $\mfrac{1}{2}\sqrt{r+1}$, on the convergence 
to the invariant distribution in the total variation distance.



We end with an example of the transition matrix of this Markov chain,
and a natural generalization.

\begin{example}\label{ex:ex}
Let $n = 10$ and $r = 4$. The transition matrix of $M$ with rows and columns labelled $\{0,1,2,3,4\}$
with $(0,0)$ at the bottom-right is
\[ \left( \begin{matrix} 
	\cdot & \cdot & \cdot & \cdot & 1 \\[2pt]
	\cdot & \cdot & \cdot & \frac{2}{3} & \frac{1}{3} \\[2pt]
	\cdot & \cdot & \frac{2}{5} & \frac{8}{15} & \frac{1}{15} \\[2pt]
	\cdot & \frac{1}{5} & \frac{3}{5} & \frac{1}{5} & 0 \\[2pt]
	\frac{1}{15} & \frac{8}{15} & \frac{2}{5} & 0 & 0
 \end{matrix} \right) \]
where $\cdot$ denotes a zero entry implied by the observation in the proof of Theorem~\ref{thm:involutive} 
that if $x$ steps to $z$ then $z \le r-x$.
By the anti-diagonal eigenvalue property, the eigenvalues are $1$, $-\mfrac{2}{3}$, $\mfrac{2}{5}$,
$-\mfrac{1}{5}$, $\mfrac{1}{15}$. The unique invariant distribution to which the Markov chain
converges is $\mfrac{1}{210}(1, 24, 90, 80, 15)$. To illustrate $(\ddagger)$, 
we take the starting distribution $\theta$ to be $(\mfrac{1}{35}, \mfrac{4}{35}, \mfrac{2}{7}, \mfrac{4}{7}, 0)$,
chosen so that 
\smash{$\theta = \pi + \frac{1}{\sqrt{3}} v^{(-1/5)}$}.
The right-hand
side of $(\ddagger)$ is $5^{-2t}/3$ and 
\smash{$\bigl|\bigl| \P[M_t = x | M_0 \sim \theta]_{0 \le x \le r} - \pi 
\bigr|\bigr|_{\mathrm{TV}} \le \mfrac{\sqrt{5}}{2\sqrt{3}} 5^{-t}$}.
\end{example}

More generally, we may consider, for each $\ts \in \{0,1,\ldots, r\}$, the \emph{type $\ts$ skewed $r$-random
to top} shuffle, in which~$r-\ts$ cards are chosen from the top~$r$ cards in the deck,~$\ts$ cards from
the bottom $n-r$ cards in the deck, and all $r$ cards are brought to the top, preserving their
relative order. 
One can show that all these shuffles also lump on double cosets, and, because the Hecke algebra of double
cosets of $\Sym_r \times \Sym_{n-r}$ in $\Sym_n$ is commutative, the transition matrices of the lumped
random walks commute. 

\begin{example}\label{ex:ctd}
Example~\ref{ex:ex} shows the transition matrix of the $4$-skewed $4$-random to top shuffle.
The transition matrices of the type $1$-, $2$- and $3$-skewed $4$-random
to top shuffles when $n=10$ are shown below, again
with $(0,0)$ at the bottom right. (The type~$0$ skewed $4$-random to top shuffle is the identity.)
\begin{align*} &\left( \begin{matrix} 
	\zero & 1 & \zero & \zero & \zero \\
	\frac{1}{24} & \frac{1}{3} & \frac{5}{8} & \zero & \zero \\[2pt]
	\zero & \frac{1}{6} & \frac{1}{2} & \frac{1}{3} & \zero \\[2pt]
	\zero & \zero & \frac{3}{8} & \frac{1}{2} & \frac{1}{8} \\[2pt]
	\zero & \zero & \zero & \frac{2}{3} & \frac{1}{3} \end{matrix} \right) \quad
   \left( \begin{matrix}
    \zero & \zero & 1 & \zero & \zero \\
    \zero & \frac{1}{6} & \frac{1}{2} & \frac{1}{3} & \zero \\[2pt]
    \frac{1}{90} & \frac{2}{15} & \frac{13}{30} & \frac{16}{45} & \frac{1}{15} \\[2pt]
    \zero & \frac{1}{10} & \frac{2}{5} & \frac{2}{5} & \frac{1}{10} \\[2pt]
    \zero & \zero & \frac{2}{5} & \frac{8}{15} & \frac{1}{15} \end{matrix} \right) \quad
   \left( \begin{matrix}
    \zero & \zero & \zero &  1 & \zero \\
    \zero & \zero & \frac{3}{8} & \frac{1}{2} & \frac{1}{8} \\[2pt] 
    \zero & \frac{1}{10} & \frac{2}{5} & \frac{2}{5} & \frac{1}{10} \\[2pt]
    \frac{1}{80} & \frac{3}{20} & \frac{9}{20} & \frac{7}{20} & \frac{3}{80} \\[2pt]
    \zero & \frac{1}{5} & \frac{3}{5} & \frac{1}{5} & \zero \end{matrix}\right) \\
&   \hspace*{15pt} (1, \mfrac{7}{12}, \mfrac{1}{4}, 0, -\mfrac{1}{6}) \hspace*{0.50in}
           (1, \mfrac{1}{6},  -\mfrac{1}{10}, -\mfrac{1}{15}, \mfrac{1}{15}) \hspace*{0.44in}
           (1, -\mfrac{1}{4}, -\mfrac{1}{20}, \mfrac{1}{10}, -\mfrac{1}{20})
\end{align*}
Since these matrices commute, they admit a simultaneous eigenbasis. 
The eigenvalues,
in the order corresponding to the eigenvalues 
$(1, -\mfrac{2}{3}, \mfrac{2}{5}, -\mfrac{1}{5}, \mfrac{1}{15})$
of the skewed $4$-random to top shuffle seen in Example~\ref{ex:ex}, are shown below each matrix.
If we choose
a skewed shuffle of type $\ts$ with probability proportional to $\binom{r}{\ts}$ then we obtain
the usual $r$-random to top shuffle. Thus the weighted sum of the five skewed shuffle matrices
is the transition matrix of the $r$-random to top shuffle, lumped on double cosets.
In particular, its eigenvalues, namely
$(1, \mfrac{1}{6}, \mfrac{1}{10}, \mfrac{1}{20}, \mfrac{3}{80})$,
are the weighted sums of the eigenvalues for the five skewed shuffle matrices.
This should help to explain why the skewed $r$-random to top shuffle and its
generalizations have negative eigenvalues, even though typically shuffle matrices
have only non-negative eigenvalues.
\end{example}

\section*{Acknowledgements}
We thank an anonymous referee for very helpful comments and a careful reading of the paper.
We also thank Edward Crane for helpful comments.

\end{document}